\newcommand{\R}{\mathbb{R}}
\newcommand{\N}{\mathbb{N}}
\newcommand{\Z}{\mathbb{Z}}
\newtheorem{theorem}{Theorem}[section]
\newtheorem{lemma}[theorem]{Lemma}
 \theoremstyle{definition}
\newtheorem{definition}[theorem]{Definition}
 \theoremstyle{remark}
 \numberwithin{equation}{section}
\begin{document}
\title{Heat content in non-compact Riemannian manifolds}
\author{M. van den Berg}
%EndAName
\address{School of Mathematics\\
University of Bristol\\
University Walk, Bristol BS8 1TW\\
United Kingdom}
\email{{mamvdb@bristol.ac.uk}}
\subjclass{58J32; 58J35; 35K20}

\keywords{Heat content; Riemannian manifold, volume doubling, Poincar\'e inequality.}

\date{31 December 2018}
\begin{abstract}\noindent Let $\Omega$ be an open set
in a complete, smooth, non-compact, $m$-dimensional Riemannian manifold $M$ without boundary, where $M$ satisfies a two-sided Li-Yau gaussian heat kernel bound. It is shown that if $\Omega$ has infinite measure, and if $\Omega$ has finite heat content $H_{\Omega}(T)$ for some $T>0$, then $H_{\Omega}(t)<\infty$ for all $t>0$. Comparable two-sided bounds for $H_{\Omega}(t)$ are obtained for such $\Omega$.
\end{abstract}
\maketitle

%\mbox{}\newpage

\section{Introduction\label{sec1}}
Let $(M,g)$ be a complete, smooth, non-compact, $m$-dimensional Riemannian
manifold without boundary, and let $\Delta$ be the
Laplace-Beltrami operator acting in $L^2(M)$. It is well known (see
\cite{EBD3}, \cite{GB}, \cite{LSC}, \cite{RS}) that the heat equation
\begin{equation}\label{e1}
\Delta u(x;t)=\frac{\partial u(x;t)}{\partial t},\quad x\in M,\quad t>0, \
\end{equation}
has a unique, minimal, positive fundamental solution $p_M(x,y;t)$
where $ x\in M$, $y \in M$, $t>0$. This solution, the
heat kernel for $M$, is symmetric in $x,y$, strictly positive,
jointly smooth in $x,y\in M$ and $t>0$, and it satisfies the
semigroup property
\begin{equation} \label{e2}
p_M(x,y;s+t)=\int_{M}dz\ p_M(x,z;s)p_M(z,y;t),
\end{equation}
for all $x,y\in M$ and all $t,s>0$, where $dz $ is the Riemannian
measure on $M$.

We define the heat content of an open set $\Omega$ in $M$ with boundary $\partial \Omega$ at $t$ by
\begin{equation*}%\label{e3}
    H_{\Omega}(t)=\int_{\Omega}dx\,\int_{\Omega} dy\, p_M(x,y;t).
\end{equation*}

It was shown (\cite{vdBG2}) that if $\Omega$ is non-empty, bounded, and $\partial\Omega$ is of class
$C^{\infty}$, and if $(M,g)$ satisfies exactly one of the following three conditions: (i) $M$ is compact and without
boundary, (ii) $(M,g)=(\mathbb{R}^m,g_e)$ where $g_e$ is the usual Euclidean metric on $\mathbb{R}^m$, (iii) $M$ is a compact
submanifold of $\mathbb{R}^m$ with smooth boundary and $g=g_e|_M$, then there exists a complete asymptotic series such that
\begin{equation}\label{e4}
H_{\Omega}(t)=\sum_{j=0}^{J-1}\beta_jt^{j/2}+O(t^{J/2}),\
t\downarrow 0,
\end{equation}
where $J\in \N$ is arbitrary, and where the $\beta_j:j=0,1,2,\dots$ are locally computable geometric invariants. In particular,
we have that
\begin{equation*}%\label{e5}
\beta_0=\vert\Omega\vert,\,  \beta_1=-\pi^{-1/2}\textup{Per}(\Omega),\,  \beta_2=0,
\end{equation*}
where $\vert \Omega\vert$ is the measure of $\Omega$, and $\textup{Per}(\Omega)$ is the perimeter of $\Omega$.

For earlier results in the Euclidean setting we refer to
\cite{MPPP1}, \cite{MPPP2}, \cite {P}, and subsequently to \cite{mvdB13}, \cite{vdBGi}, and \cite{vdBGi2}.

Define $u_{\Omega}:\Omega \times (0,\infty)\mapsto \R$ by
\begin{equation}\label{e6}
    u_{\Omega}(x;t)=\int_{\Omega}dy\, p_M(x,y;t).
\end{equation}
Then $u_{\Omega}$ is a solution of the heat equation \eqref{e1} and satisfies
\begin{equation}\label{e7}
  \lim_{t\downarrow 0}  u_{\Omega}(x;t)=\textbf{1}_{\Omega}(x), \ x\in M-\partial\Omega,\,
\end{equation}
where $\textbf{1}_{\Omega}:M\mapsto\{0,1\}$ is the characteristic
function of $\Omega$, and where the convergence in \eqref{e7} is locally uniform. It can be shown that if $|\Omega|<\infty$, then the convergence is also in $L^1(M)$. If $\Omega$ has infinite measure and  $|\partial \Omega|=0$, then the convergence is also in $L^1_{\textup{loc}}(M)$ (Section 7.4 in \cite{GB1}).

In this paper we obtain bounds for the heat content in the case where $\Omega$ has possibly infinite measure or infinite perimeter, and where $M$ satisfies the following condition.

There exists $C\in [2,\infty)$ such that for all $x\in M,\,y\in M,\,t>0,\, R>0$,
\begin{equation}\label{e8}
\frac{e^{-Cd(x,y)^2/t}}{C\sqrt{|B(x;t^{1/2})||B(y;t^{1/2})|}}\le p_M(x,y;t)\le \frac{Ce^{-d(x,y)^2/(Ct)}}{\sqrt{|B(x;t^{1/2})||B(y;t^{1/2})|}},
\end{equation}
and
\begin{equation}\label{e9}
|B(x;2R)|\le C |B(x;R)|,
\end{equation}
where $B(x;R)=\{y\in M:d(x,y)<R\}$, and $d(x,y)$ denotes the geodesic distance between $x$ and $y$.

It was shown independently in \cite{GB2} and \cite{LSC1} that $M$ satisfying a volume doubling property and a Poincar\'e inequality is equivalent to $M$ satisfying a parabolic Harnack principle, and is also equivalent to the Li-Yau bound \eqref{e8} above. See for example Theorem 5.4.12 in \cite{LSC}. We included \eqref{e9} in the definition of the constant $C$, even though the volume doubling property is implied by \eqref{e8}.

We recall a few basic facts.\\
\noindent\textup{(i)} Volume doubling implies that for $x\in M,r_0>0$,
\begin{equation*}%\label{e10}
\int_{r_0}^{\infty}dr\, r(\log \vert B(x;r)\vert)^{-1}=+\infty.
\end{equation*}
Hence $u_{\Omega}$, defined by \eqref{e6}, is the unique, bounded solution of \eqref{e1} with initial condition \eqref{e7} in the sense of $L^1_{\textup{loc}}(M)$.
Moreover stochastic completeness holds. That is for all $x\in M,\,t>0$,
\begin{equation}\label{e11}
\int_Mdy\,p_M(x,y;t)=1.
\end{equation}
 We refer to Chapter 9 in \cite{GB}.\\
\noindent\textup{(ii)} If $H_{\Omega}(t)<\infty$ for all $t>0$, then for all $t>0,\,s>0$ we have by Cauchy-Schwarz's inequality, \eqref{e2} and \eqref{e6} that
\begin{align*}%\label{e12}
H_{\Omega}((t+s)/2)&=\int_Mdz\,\int_{\Omega}dy\,\int_{\Omega}dx\,p_M(x,z;t/2)p_M(x,z;s/2)\nonumber \\
&=\int_Mdx\,u_{\Omega}(x;t/2)u_{\Omega}(x;s/2)\nonumber \\ &
\le\bigg(\int_Mdx\,u^2_{\Omega}(x;t/2)\bigg)^{1/2}\bigg(\int_Mdx\,u^2_{\Omega}(x;s/2)\bigg)^{1/2}\nonumber \\ &
=\bigg(H_{\Omega}(t)H_{\Omega}(s)\bigg)^{1/2}.
\end{align*}
Hence $t\mapsto H_{\Omega}(t)$ is mid-point log-convex, log-convex, convex, and hence continuous on $(0,\infty)$.\\
\noindent\textup{(iii)} If \eqref{e9} holds for all $x\in M\,,R>0$ then
\begin{equation}\label{e13}
\frac{\vert B(x;r_2)\vert}{\vert B(x;r_1)\vert}\le C \bigg(\frac{r_2}{r_1}\bigg)^{(\log C)/\log 2}\,, r_2\ge r_1.
\end{equation}
We refer to (2.2) in \cite{LSC1}.\\
\noindent\textup{(iv)} $t\mapsto H_{\Omega}(t)$ is decreasing: if $H_{\Omega}(t)<\infty$ for some $t>0$, then for $s>0,$
\begin{align}\label{e14}
H_{\Omega}(t+s)&=\int_{M}dx\,u^2_{\Omega}(x;(t+s)/2)\nonumber \\ &=\int_Mdx\,\int_{M}dy_1\,p_M(x,y_1;s/2)u_{\Omega}(y_1;t/2)\nonumber \\ &  \hspace{5mm}\times\int_{M}dy_2\,p_M(x,y_2;s/2)u_{\Omega}(y_2;t/2)\nonumber \\
&=\int_{M}dy_1\,\int_{M}dy_2\,p_M(y_1,y_2;s)u_{\Omega}(y_1;t/2)u_{\Omega}(y_2;t/2)\nonumber \\
&\le\frac12 \int_{M}dy_1\,\int_{M}dy_2\,p_M(y_1,y_2;s)(u^2_{\Omega}(y_1;t/2)+u^2_{\Omega}(y_2;t/2))\nonumber \\
&=\int_{M}dy_1\,\int_{M}dy_2\,p_M(y_1,y_2;s)u^2_{\Omega}(y_1;t/2)\nonumber \\
&\le\int_{M}dy\,u^2_{\Omega}(y;t/2)\nonumber \\ &=H_{\Omega}(t).
\end{align}

We make the following.
\begin{definition}\label{def1}
For $x\in M$, $\Omega\subset M$, and $R>0$,
\begin{equation*}%\label{e15}
\mu_{\Omega}(x;R)=\vert B(x;R)\cap \Omega\vert,
\end{equation*}
\begin{equation*}%\label{e20}
\nu_{\Omega}(x;R)=|B(x;R)-\Omega|.
\end{equation*}
\end{definition}
Our main result is the following.

\begin{theorem}\label{the1} Let $M$ be a complete, smooth, non-compact, $m$-dimensional Riemannian manifold without boundary, and let $\Omega\subset M$ be open. Suppose that \eqref{e8}, \eqref{e9} holds for some $C\in [2,\infty)$. Then
\begin{enumerate}
\item[\textup{(i)}] If $H_{\Omega}(T)<\infty$ for some $T>0$, then
\begin{equation}\label{e16}
\int_{\Omega}dx \frac{{\mu_{\Omega}(x;t^{1/2})}}{ {\vert
B(x;t^{1/2})}\vert}<\infty,
\end{equation}
for all $t>0$.
\item[\textup{(ii)}] If \eqref{e16} holds for some $t=T>0$, then
\begin{equation}\label{e17}
K_1\int_{\Omega}dx \frac{\mu_{\Omega}(x;t^{1/2})}{\vert
B(x;t^{1/2})\vert}\le H_{\Omega}(t)\le K_2\int_{\Omega}dx
\frac{\mu_{\Omega}(x;t^{1/2})}{\vert B(x;t^{1/2})\vert},
\end{equation}
for all $t>0$, where
\begin{align}\label{e18}
K_1&=C^{-2}e^{-C},\nonumber \\
 K_2&=2C^{15/4}\bigg(C\log\big(2C^{(7/2)+((\log C)/(\log 2))}\big)\bigg)^{(3\log C)/(4\log 2)}.
\end{align}
\end{enumerate}
\end{theorem}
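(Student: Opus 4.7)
The plan is to prove both bounds of (ii) directly, and derive (i) as a corollary. Given the lower bound at $t=T$, the hypothesis $H_\Omega(T)<\infty$ forces $F(T):=\int_\Omega\mu_\Omega(x;T^{1/2})/|B(x;T^{1/2})|\,dx<\infty$. Once the upper bound of (ii) is available, the pair $F(t)\asymp H_\Omega(t)$ combines with the elementary inequality $F(t)\le C F(2t)$ (a direct consequence of doubling and of monotonicity of $r\mapsto\mu_\Omega(x;r)$) and the monotonicity of $H_\Omega$ from \eqref{e14} to propagate finiteness of $F(\,\cdot\,)$ to every $t>0$.

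For the lower bound of (ii), I restrict the double integral defining $H_\Omega(t)$ to $\{(x,y)\in\Omega\times\Omega:d(x,y)<t^{1/2}\}$. On this set the lower half of \eqref{e8} gives a Gaussian factor at least $e^{-C}$, and doubling applied to $B(y;t^{1/2})\subseteq B(x;2t^{1/2})$ yields $\sqrt{|B(x;t^{1/2})||B(y;t^{1/2})|}\le C^{1/2}|B(x;t^{1/2})|$. Combining these gives $H_\Omega(t)\ge(C^{-3/2}e^{-C})F(t)$, comfortably implying the claimed value $K_1=e^{-C}/C^2$.

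For the upper bound of (ii) I start from the upper half of \eqref{e8}. A symmetry argument replacing $1/\sqrt{|B(x;t^{1/2})||B(y;t^{1/2})|}$ by $1/|B(x;t^{1/2})|$ on the region $\{|B(x;t^{1/2})|\le|B(y;t^{1/2})|\}$ (at the cost of a factor $2$), followed by a layer-cake expansion in $r=d(x,y)$ and the substitution $r=\tau t^{1/2}$, produces
\[H_\Omega(t)\ \le\ 4\int_0^\infty \tau\,e^{-\tau^2/C}\,F_\tau(t)\,d\tau,\qquad F_\tau(t):=\int_\Omega\frac{\mu_\Omega(x;\tau t^{1/2})}{|B(x;t^{1/2})|}\,dx.\]
For $\tau\le 1$ one has $F_\tau(t)\le F(t)$ trivially. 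For $\tau>1$ one needs a comparability bound $F_\tau(t)\le\kappa(\tau)F(t)$ with $\kappa$ of polynomial growth, so that the Gaussian factor makes the integral finite and yields $H_\Omega(t)\le K_2 F(t)$.

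The main obstacle is this comparability inequality for $\tau\ge 1$. Naive pointwise estimates (such as $\mu_\Omega(x;\tau t^{1/2})/|B(x;t^{1/2})|\le C\tau^{(\log C)/\log 2}$ via \eqref{e13}) introduce a factor of $|\Omega|$ after integration, which is useless when $|\Omega|=\infty$; and a direct Fubini swap is circular. I would instead pursue a Vitali-type argument: cover $B(x;\tau t^{1/2})$ by at most $C\tau^{(\log C)/\log 2}$ balls $B(z_j;t^{1/2})$ with bounded overlap, discretize $\mu_\Omega(x;\tau t^{1/2})\le\sum_j\mu_\Omega(z_j;2t^{1/2})$, and reassemble the local contributions into $F(t)$ through Fubini and doubling. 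Finally, splitting the $\tau$-integral at an optimized threshold $\tau_0=\tau_0(C)$ balances the polynomial growth of $\kappa(\tau)$ against the Gaussian decay $e^{-\tau^2/C}$; the elaborate form of $K_2$ in \eqref{e18}, featuring factors of $\log C$ and $(\log C)/\log 2$, reflects precisely this optimization.
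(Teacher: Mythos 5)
Your lower bound is essentially the paper's (restrict to $d(x,y)<t^{1/2}$, use the lower half of \eqref{e8} and doubling); you even get a slightly better constant by applying \eqref{e9} directly rather than \eqref{e13}. Your upper bound, however, takes a genuinely different route, and there is a gap in it as written.

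The paper's upper-bound argument splits the double integral over $\Omega_n\times\Omega_n$ at the scale $R$ and then, on the far region $d(x,y)\ge R$, writes $e^{-d(x,y)^2/(Ct)}\le e^{-R^2/(2Ct)}e^{-d(x,y)^2/(2Ct)}$, converts the remaining expression into a lower bound for $p_M(x,y;2C^2t)$ (via \eqref{e13} and the lower half of \eqref{e8}), and thereby dominates the far contribution by $\mathrm{const}\cdot e^{-R^2/(2Ct)}H_{\Omega_n}(2C^2t)\le \mathrm{const}\cdot e^{-R^2/(2Ct)}H_{\Omega_n}(t)$. The crucial step $H_{\Omega_n}(2C^2t)\le H_{\Omega_n}(t)$ is the monotonicity \eqref{e14}, which rests on the semigroup identity \eqref{e2} and stochastic completeness \eqref{e11}. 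Choosing $R=R_*$ so that the coefficient is $\tfrac12$ and rearranging (first for $\Omega_n$, then letting $n\to\infty$) closes the loop. Your layer-cake decomposition $H_\Omega(t)\le 4\int_0^\infty\tau e^{-\tau^2/C}F_\tau(t)\,d\tau$ is correct, and it replaces this bootstrap by a purely geometric averaging argument; what you would need is precisely $F_\tau(t)\le\kappa(\tau)F(t)$ with $\kappa$ of polynomial growth. You acknowledge this as the main obstacle, but the Vitali sketch as written does not close it: the cover $\{B(z_j;t^{1/2})\}$ of $B(x;\tau t^{1/2})$ depends on $x$, and the $z_j$ need not lie in $\Omega$, so after integrating over $x\in\Omega$ there is no direct way to "reassemble" $\sum_j\mu_\Omega(z_j;2t^{1/2})/|B(x;t^{1/2})|$ into the integral $F(t)=\int_\Omega \mu_\Omega(x;t^{1/2})/|B(x;t^{1/2})|\,dx$. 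To make it work one should instead take a single maximal $t^{1/2}/2$-separated net $\{z_j\}$ of $M$, independent of $x$, write $F_\tau(t)\lesssim\sum_j a_j b_j/c_j$ with $a_j=|\Omega\cap B(z_j;t^{1/2})|$, $b_j=|\Omega\cap B(z_j;(\tau+1)t^{1/2})|$, $c_j=|B(z_j;t^{1/2})|$, expand $b_j\lesssim\sum_{k:d(z_j,z_k)<(\tau+2)t^{1/2}}a_k$, and then symmetrize via $a_ja_k\le\tfrac12(a_j^2+a_k^2)$ together with the doubling-controlled cardinality and volume comparability of neighbouring net balls. That yields $F_\tau(t)\lesssim\tau^{\gamma}F(t)$ with $\gamma$ a multiple of $(\log C)/\log 2$, after which the Gaussian weight $e^{-\tau^2/C}$ finishes the integral. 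This is a viable alternative; what the paper's bootstrap buys is that one never needs such a covering lemma, at the price of invoking the non-geometric fact \eqref{e14}. Note also that your argument, once repaired, would produce a different constant than the specific $K_2$ of \eqref{e18}, so the statement you would prove is \eqref{e17} with a constant of the same qualitative form rather than with the stated $K_2$; and your deduction of (i) from the unconditional lower bound together with $F(t)\le CF(2t)$ (for $t$ below $T$) and the monotonicity of $H_\Omega$ (for $t$ above $T$) is sound.
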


If $\Omega$ has finite Lebesgue measure, then we define the heat loss of $\Omega$ in $M$ at $t$ by
\begin{equation}\label{e19}
F_{\Omega}(t)=|\Omega|-H_{\Omega}(t).
\end{equation}
We have that the heat loss $t\mapsto
F_{\Omega}(t)$ of $\Omega$ in $M$ is increasing, concave, subadditive, and continuous. If
$\Omega$ is bounded and $\partial \Omega$ is smooth, then,
by \eqref{e4}, there exists an asymptotic series of which the first few
coefficients are known explicitly. Theorem \ref{the2} below concerns the general
situation $\vert\Omega\vert<\infty,$ and gives bounds in non-classical geometries where e.g. either
$\Omega$ has infinite perimeter, and/or $\partial\Omega$ is not smooth.

\begin{theorem}\label{the2}Let $M$ be a complete, smooth, non-compact, $m$-dimensional Riemannian manifold without boundary, and let $\Omega\subset M$ be open with finite Lebesgue measure. Suppose that \eqref{e8}, \eqref{e9} holds for some $C\in [2,\infty)$. Then
\begin{equation}\label{e21}
L_1\int_{\Omega}dx \, \frac{\nu_{\Omega}(x;t^{1/2})}{\vert
B(x;t^{1/2})\vert}\le F_{\Omega}(t)\le L_2\int_{\Omega}dx \,
\frac{\nu_{\Omega}(x;t^{1/2})}{\vert B(x;t^{1/2})\vert}, \ \ \textup{for all}\, t>0,
\end{equation}
where
\begin{align}\label{e22}
L_1&=C^{-2}e^{-C},\nonumber \\
L_2&=4C^{15/4}\bigg(C\log\big(2C^{7+((\log C)/\log 2)}\big)\bigg)^{1+((3\log C)/(4\log 2))}.
\end{align}
\end{theorem}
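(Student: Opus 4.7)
The plan is to mirror the proof of Theorem~\ref{the1}, exchanging the role of $\Omega$ for $M\setminus\Omega$ inside the kernel integral. Write
$$F_\Omega(t)=\int_\Omega dx\int_{M\setminus\Omega}dy\,p_M(x,y;t)$$
and bound the integrand above and below using the Li--Yau estimate \eqref{e8}, folding every ball volume back to the base scale $t^{1/2}$ via \eqref{e9} and \eqref{e13}.

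For the lower bound in \eqref{e21} I would restrict the inner integral to the collar $y\in B(x;t^{1/2})\setminus\Omega$, which has measure $\nu_\Omega(x;t^{1/2})$. On this set $e^{-Cd(x,y)^2/t}\ge e^{-C}$, and \eqref{e9} gives $|B(y;t^{1/2})|\le C|B(x;t^{1/2})|$ (since $B(y;t^{1/2})\subset B(x;2t^{1/2})$), so the lower half of \eqref{e8} yields $p_M(x,y;t)\ge e^{-C}/(C^{3/2}|B(x;t^{1/2})|)$. Integrating gives the left-hand inequality in \eqref{e21} with a constant at least as good as the stated $L_1=C^{-2}e^{-C}$.

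For the upper bound I would run the analogue of the upper bound of Theorem~\ref{the1}. Apply the Gaussian upper half of \eqref{e8} to $p_M$, decompose the $y$-integral into annuli $A_k(x)=\{y:kt^{1/2}\le d(x,y)<(k+1)t^{1/2}\}$, and use \eqref{e13} to compare $|B(y;t^{1/2})|$ with $|B(x;t^{1/2})|$ up to the factor $C(k+2)^{(\log C)/\log 2}$. The inner integrand on each annulus is then majorised by $C^{3/2}(k+2)^{(\log C)/(2\log 2)}e^{-k^2/C}/|B(x;t^{1/2})|$, and one uses $|A_k(x)\cap\Omega^c|\le\nu_\Omega(x;(k+1)t^{1/2})$. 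This reduces matters to bounding the series
$$\sum_{k=0}^\infty(k+2)^{(\log C)/(2\log 2)}e^{-k^2/C}\int_\Omega\frac{\nu_\Omega(x;(k+1)t^{1/2})}{|B(x;t^{1/2})|}\,dx$$
by a constant multiple of $\int_\Omega \nu_\Omega(x;t^{1/2})/|B(x;t^{1/2})|\,dx$. The key technical step is an "inverse doubling" for $\nu_\Omega$: for each $R\ge t^{1/2}$ one shows
$$\int_\Omega\frac{\nu_\Omega(x;R)}{|B(x;t^{1/2})|}\,dx\le C'\,(R/t^{1/2})^{\,1+(\log C)/\log 2}\int_\Omega\frac{\nu_\Omega(x;t^{1/2})}{|B(x;t^{1/2})|}\,dx,$$
which I would prove by a Fubini exchange combined with a bounded-overlap cover of $B(y;R)\cap\Omega$ (for $y\in\Omega^c$) by at most $C(R/t^{1/2})^{(\log C)/\log 2}$ balls of radius $t^{1/2}$ allowed by \eqref{e13}, plus a comparison of the thickened collar $\{x\in\Omega:d(x,\Omega^c)<R\}$ to the base-scale integral. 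Once this is in place, the Gaussian weights $e^{-k^2/C}$ absorb the polynomial factor $(k+2)^{O(\log C/\log 2)}$, and collecting constants produces the stated $L_2$.

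The principal obstacle is exactly the inverse-doubling step just described: controlling $\nu_\Omega$ at the large scale $R$ by the base-scale quantity without incurring a factor involving $|M\setminus\Omega|$ (which may be infinite). The extra factor $C\log(2C^{7+(\log C)/\log 2})$ and the extra $+1$ in the outer exponent appearing in $L_2$, relative to the constant $K_2$ of Theorem~\ref{the1}, is precisely the cost of this additional covering-and-doubling argument, which was not needed for $H_\Omega(t)$ because there the integration ran over $\Omega$ on both sides.
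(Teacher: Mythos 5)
Your lower bound is essentially the same as the paper's: use stochastic completeness to rewrite $F_\Omega(t)=\int_\Omega dx\int_{M\setminus\Omega}dy\,p_M(x,y;t)$, restrict to $y\in B(x;t^{1/2})\setminus\Omega$, and apply the Gaussian lower bound together with the volume comparison from \eqref{e9}, \eqref{e13}. That part is fine.

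Your upper bound, however, takes a genuinely different route from the paper's, and it has a real gap. The paper does a near/far split with a single cutoff radius $R$: the near field is controlled directly by $\int_\Omega\nu_\Omega(x;R)/|B(x;t^{1/2})|\,dx$ as in your annular argument, but the far field is estimated \emph{not} by large-scale $\nu_\Omega$ quantities but by $C^{(7/2)+(\log C)/\log2}e^{-R^2/(2Ct)}F_\Omega(2C^2t)$, using the lower Li--Yau bound to reconstruct a heat kernel at the later time $2C^2t$. One then invokes the subadditivity of $t\mapsto F_\Omega(t)$ (an elementary consequence of the semigroup property and stochastic completeness) to get $F_\Omega(2C^2t)\le C^{7/2}F_\Omega(t)$, chooses $R=R_*$ so that the resulting coefficient of $F_\Omega(t)$ equals $\tfrac12$, and absorbs. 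This bootstrap entirely avoids comparing $\nu_\Omega(\cdot;R)$ at a large scale to $\nu_\Omega(\cdot;t^{1/2})$.

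By contrast, your annular decomposition forces you to prove what you call the ``inverse doubling'' inequality
\[
\int_\Omega\frac{\nu_\Omega(x;R)}{|B(x;t^{1/2})|}\,dx\;\le\;C'\Bigl(\frac{R}{t^{1/2}}\Bigr)^{\,1+(\log C)/\log2}\int_\Omega\frac{\nu_\Omega(x;t^{1/2})}{|B(x;t^{1/2})|}\,dx,
\]
and the covering argument you sketch does not deliver it: if you cover $B(y;R)\cap\Omega$ by balls of radius $t^{1/2}$, some of those balls can lie entirely inside $\Omega$, so for $x$ in such a ball $\nu_\Omega(x;t^{1/2})=0$ and those portions of the cover contribute nothing to the base-scale integral, while they do contribute to the left-hand side. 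A Fubini swap also fails pointwise, because for $y\in M\setminus\Omega$ at distance between $t^{1/2}$ and $R$ from $\Omega$ one has $|\Omega\cap B(y;t^{1/2})|=0$ but $|\Omega\cap B(y;R)|>0$. The inequality you want does appear to be true in $\R^m$ (writing $\int_\Omega\nu_\Omega(x;R)\,dx$ as a double integral over $\Omega\times\Omega^c$ and substituting the geodesic midpoint gives a clean $2r$-versus-$r$ doubling estimate), but in a general manifold with only volume doubling and a Poincar\'e inequality the midpoint substitution is delicate, and in any case this is an additional lemma that needs a correct proof before your route closes. The paper's subadditivity bootstrap is cleaner precisely because it sidesteps this comparison altogether; you would need either to adopt that bootstrap or to supply a correct proof of the inverse-doubling lemma for $\nu_\Omega$ on a doubling space.
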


This paper is organised as follows. In Section \ref{sec2} we give the proofs of Theorem \ref{the1} and Theorem \ref{the2}. In Section \ref{sec3} we analyse an example of $\Omega$ in $\R^m$ where precise analysis of $H_{\Omega}(t)$ is possible.

\section{Proofs\label{sec2}}

The main idea in the proof of Theorem \ref{the1} is to use the Li-Yau bound \eqref{e8}, and \eqref{e13} to bootstrap $\int_{\{x\in M-\Omega:\inf_{y\in \Omega}d(x,y)\ge ct^{1/2}\}}dx\,u_{\Omega}(x;t)$ in terms of $H_{\Omega}(t)$. This is possible for $c$ sufficiently large (in terms of $C$).
A similar bootstrap argument features in the proof of Theorem \ref{the2}. There, the stochastic completeness of $M$, \eqref{e11}, is also exploited.

\noindent{\it Proof of Theorem \ref{the1}.}
(i) Let $t\ge T>0,$ and suppose that $H_{\Omega}(T)<\infty$. Let $R>0$. By \eqref{e8} and \eqref{e14} we have that
\begin{align}\label{e23}
H_{\Omega}(T)&\ge H_{\Omega}(t)\nonumber \\ &\ge  \int_{\Omega}dx\int_{\Omega\cap B(x;R)}dy\,
p_M(x,y;t)\nonumber \\ &\ge
C^{-1}e^{-CR^2/t}\int_{\Omega}dx\int_{\Omega\cap B(x;R)}dy\,(\vert
B(x;t^{1/2})\vert \vert B(y;t^{1/2})\vert )^{-1/2}.
\end{align}
For $d(x,y)<R$,  $B(y;t^{1/2})\subset B(x;R+t^{1/2})$, so that by
\eqref{e13},
\begin{equation}\label{e24}
\vert B(y;t^{1/2})\vert\le C\left(\frac{R+t^{1/2}}{t^{1/2}}\right)^{(\log C)/\log 2}
\vert B(x;t^{1/2})\vert.
\end{equation}
The choice $R=t^{1/2}$ implies, by \eqref{e23} and \eqref{e24}, that
\begin{equation}\label{e25}
H_{\Omega}(T)\ge H_{\Omega}(t)\ge K_1\int_{\Omega}dx
\frac{\mu_{\Omega}(x;t^{1/2})}{\vert B(x;t^{1/2})\vert},\ t\ge T,
\end{equation}
with $K_1$ given in \eqref{e18}.

Next suppose that $0<t\le T$. By \eqref{e13}, and \eqref{e25} for $t=T$, we have that
\begin{align*}%\label{e27}
\int_{\Omega}dx \frac{\mu_{\Omega}(x;t^{1/2})}{\vert
B(x;t^{1/2})\vert}&\le
C\left(\frac{T}{t}\right)^{(\log C)/\log 4}\int_{\Omega}dx\frac{\mu_{\Omega}(x;T^{1/2})}{\vert
B(x;T^{1/2})\vert}\nonumber \\ & \le \frac{C}{K_1}\left(\frac{T}{t}\right)^{(\log C)/\log 4}H_{\Omega}(T).
\end{align*}
This completes the proof of the assertion in part (i).

(ii) Let $n>0,\ p\in
\Omega, R>0$, and $\Omega_n=\Omega\cap B(p;n)$, and suppose that
\eqref{e16} holds for some $t=T>0$. Then $\vert\Omega_n\vert \le \vert
B(p;n)\vert<\infty$. Reversing
the roles of $x$ and $y$ in \eqref{e24} we have that for $d(x,y)<R$,
\begin{equation}\label{e28}
\vert B(y;t^{1/2})\vert\ge C^{-1}\left(\frac{t^{1/2}}{R+t^{1/2}}\right)^{(\log C)/\log 2}
\vert B(x;t^{1/2})\vert.
\end{equation}
We have that
\begin{align}\label{e29}
\int_{\Omega_n}&dx\,\int_{\Omega_n}dy\, p_M(x,y;t)\nonumber \\
&=\int_{\Omega_n}dx\,\int_{\Omega_n\cap B(x;R)}dy\,
p_M(x,y;t)+\int_{\Omega_n}dx\,\int_{\Omega_n-B(x;R)}dy\, p_M(x,y;t).
\end{align}
Using \eqref{e8} and \eqref{e28}, we see that
\begin{align}\label{e30}
\int_{\Omega_n}&dx\,\int_{\Omega_n\cap B(x;R)}dy\, p_M(x,y;t)\nonumber
\\ &\le C \int_{\Omega_n}dx\vert
B(x;t^{1/2})\vert^{-1}\int_{\Omega_n\cap B(x;R)}dy
 \left(\frac{\vert B(x;t^{1/2})\vert}{\vert B(y;t^{1/2})\vert}\right)^{1/2}e^{-d(x,y)^2/(Ct)}\nonumber \\
  &\le C^{3/2}\left(\frac{R+t^{1/2}}{t^{1/2}}\right)^{(\log C)/\log 4}\int_{\Omega_n}dx\frac{\mu_{\Omega_n}(x;R)}{\vert B(x;t^{1/2})\vert}.
\end{align}
To bound the second term in the right-hand side of \eqref{e29}, we note that
\begin{equation}\label{e31}
d(x,y)^2/(Ct)\ge
R^2/(2Ct)+d(x,y)^2/(2Ct)\, ,y\in \Omega_n-B(x;R).
\end{equation}
Hence,
\begin{align}\label{e32}
&\int_{\Omega_n}dx\,\int_{\Omega_n-B(x;R)}dy\, p_M(x,y;t)\nonumber \\ &
\le C\int_{\Omega_n}dx\int_{\Omega_n}dy\, (\vert
B(x;t^{1/2})\vert\vert
B(y;t^{1/2})\vert)^{-1/2}e^{-(d(x,y)^2+R^2)/(2Ct)}\nonumber
\\ & \le C^{(5/2)+((\log C)/\log 2)}\int_{\Omega_n}dx\int_{\Omega_n}dy \big(\lvert
B(x;(2C^2t)^{1/2})\rvert \vert
B(y;(2C^2t)^{1/2})\rvert\big)^{-1/2}\nonumber \\ &\hspace{40mm}\times e^{-(d(x,y)^2+R^2)/(2Ct)}\nonumber
\\ &\le C^{(7/2)+((\log C)/\log 2)}e^{-R^2/(2Ct)}\int_{\Omega_n}dx\int_{\Omega_n}dy\,p_{M}(x,y;2C^2t)\nonumber \\ &
\le C^{(7/2)+((\log C)/\log 2)}e^{-
R^2/(2Ct)}H_{\Omega_n}(2C^2t)\nonumber\\&\le C^{(7/2)+((\log C)/\log 2)} e^{-
R^2/(2Ct)}H_{\Omega_n}(t),
\end{align}
where we have used \eqref{e31}, \eqref{e13}, the lower bound in \eqref{e8}, and \eqref{e14}.
We now choose $R^2$ such that the coefficient of
$H_{\Omega_n}(t)$ in the right-hand side of \eqref{e32} is equal to $\frac12$. That is
\begin{equation}\label{e33}
R_*^2=2Ct\log\big(2C^{(7/2)+((\log C)/\log 2)}\big).
\end{equation}
Rearranging and bootstrapping gives, by \eqref{e29}-\eqref{e33}, and the fact that $t^{1/2}\le R_*$, that
\begin{align}\label{e34}
&H_{\Omega_n}(t)\le 2C^{3/2}\left(\frac{R_*+t^{1/2}}{t^{1/2}}\right)^{(\log C)/\log 4}\int_{\Omega}dx\frac{\mu_{\Omega_n}(x;R_*)}{\vert B(x;t^{1/2})\vert }\nonumber \\ &
\le2C^{5/2}\left(\frac{R_*+t^{1/2}}{t^{1/2}}\right)^{(\log C)/\log 4}\left(\frac{R_*}{t^{1/2}}\right)^{(\log C)/\log 2}\int_{\Omega_n}dx\frac{\mu_{\Omega}(x;R_*)}{\vert B(x;R_*)\vert}\nonumber\\&\le 2C^3\left(\frac{R^2_*}{t}\right)^{(3\log C)/(4\log 2)}\int_{\Omega}dx\frac{\mu_{\Omega_n}(x;R_*)}{\vert B(x;R_*)\vert}\nonumber \\ &
\le 2C^3\bigg(2C\log\big(2C^{(7/2)+((\log C)/\log 2)}\big)\bigg)^{(3\log C)/(4\log 2)}\int_{\Omega}dx\frac{\mu_{\Omega_n}(x;R_*)}{\vert B(x;R_*)\vert}.
\end{align}
We choose $t=t_{T}$ such that $R_*=T$, and take the limit $n\rightarrow \infty$ in the right-hand side of \eqref{e34}. This limit is finite by the hypothesis at the beginning of the proof. We conclude that
\begin{align}\label{e34a}
H_{\Omega_n}(t_{T})&\le 2C^3\bigg(2C\log\big(2C^{(7/2)+((\log C)/\log 2)}\big)\bigg)^{(3\log C)/(4\log 2)}\nonumber \\ &\hspace{45mm}\times\int_{\Omega}dx\frac{\mu_{\Omega}(x;T)}{\vert B(x;T)\vert}
\end{align} 
By monotone convergence,
\begin{align}\label{e34b}
H_{\Omega}(t_{T})&\le 2C^3\bigg(2C\log\big(2C^{(7/2)+((\log C)/\log 2)}\big)\bigg)^{(3\log C)/(4\log 2)}\nonumber\\ &\hspace{45mm}\times\int_{\Omega}dx\frac{\mu_{\Omega}(x;T)}{\vert B(x;T)\vert}.
\end{align}
By (i) we obtain that 
\begin{align}\label{e34b}
H_{\Omega}(t)&\le 2C^3\bigg(2C\log\big(2C^{(7/2)+((\log C)/\log 2)}\big)\bigg)^{(3\log C)/(4\log 2)}\nonumber\\ &\hspace{45mm}\times\int_{\Omega}dx\frac{\mu_{\Omega}(x;R_*)}{\vert B(x;R_*)\vert}
\end{align}
for all $t>0$ with $R_*$ given by \eqref{e33}.
Since $H_{\Omega}(t)$ is decreasing in $t$, and since $R_*\ge t^{1/2}$ we conclude from \eqref{e34} that
\begin{align}\label{e35}
H_{\Omega}(R_*^2)&\le 2C^{15/4}\bigg(C\log\big(2C^{(7/2)+((\log C)/(\log 2))}\big)\bigg)^{(3\log C)/(4\log 2)}\nonumber \\ &\hspace{45mm}\times\int_{\Omega}dx\frac{\mu_{\Omega}(x;R_*)}{\vert B(x;R_*)\vert}.
\end{align}
Rescaling $t$ gives the upper bound in \eqref{e17} with $K_2$ given in \eqref{e18}.
This completes the proof of the assertion in part (ii). \hspace*{\fill }$\square $  \\
\noindent{\it Proof of Theorem \ref{the2}.} To prove the lower
bound in \eqref{e21}, we have
by definition of $F_{\Omega}(t)$ in \eqref{e19}, and by \eqref{e11} that
\begin{align}\label{e38}
F_{\Omega}(t)&= \int_{\Omega}dx \int_{M} dy\
p_M(x,y;t)-\int_{\Omega}dx \int_{\Omega}dy\ p_M(x,y;t)\nonumber \\
&=\int_{\Omega}dx\int_{M-\Omega}dy\ p_M(x,y;t).
\end{align}
Hence by \eqref{e8} we have for $R>0$ that
\begin{align*}%\label{e40}
F_{\Omega}(t)&\ge
\int_{\Omega}dx\int_{ B(x;R)-\Omega}dy\,p_M(x,y;t)\nonumber \\
&\ge C^{-1}e^{-CR^2/t}\int_{\Omega}dx\int_{
B(x;R)-\Omega}dy\,\big(\vert B(x;t^{1/2})\vert\vert B(y;t^{1/2})\vert\big)^{-1/2}.
\end{align*}
Since $B(y;t^{1/2})\subset B(x;R+t^{1/2})$, for $y\in B(x;R)$, we
have by \eqref{e24} that
\begin{equation*}%\label{e41}
F_{\Omega}(t)\ge
C^{-3/2}\left(\frac{t^{1/2}}{R+t^{1/2}}\right)^{(\log C)/\log 4}e^{-CR^2/t}\int_{\Omega}dx
\, \frac{\nu_{\Omega}(x;R)}{\vert B(x;t^{1/2})\vert}.
\end{equation*}
The choice $R=t^{1/2}$ gives the lower bound in \eqref{e21}, with $L_1$ given in \eqref{e22}.

To prove the upper bound in \eqref{e21}, we let $R>0$, and write \eqref{e38} as
\begin{align}\label{e42}
F_{\Omega}(t)=&\int_{\Omega}dx\, \int_{(M-\Omega)\cap
B(x;R)}dy\,p_M(x,y;t)\nonumber
\\ &\ +\int_{\Omega}dx\int_{(M-\Omega)\cap
(M-B(x;R))}dy\,p_M(x,y;t).
\end{align}
By \eqref{e8} and \eqref{e28},
\begin{align}\label{e43}
\int_{\Omega}dx\, &\int_{(M-\Omega)\cap B(x;R)}dy\, p_M(x,y;t)\nonumber
\\ &\le C\int_{\Omega}dx\, \int_{(M-\Omega)\cap B(x;R)}dy\,
\big(\vert B(x;t^{1/2})\vert\vert
B(y;t^{1/2})\vert\big)^{-1/2}\nonumber \\ &\le
C^{3/2}\left(\frac{R+t^{1/2}}{t^{1/2}}\right)^{(\log C)/\log 4}\int_{\Omega}dx
\frac{\nu_{\Omega}(x;R)}{\vert B(x;t^{1/2})\vert}.
\end{align}
Furthermore, by \eqref{e8},
\begin{align}\label{e44}
&\int_{\Omega}dx\, \int_{(M-\Omega)\cap
(M-B(x;R))}dy\, p_M(x,y;t)\nonumber \\ & \le
C\int_{\Omega}dx\, \int_{M-\Omega}dy\frac{e^{-(d(x,y)^2+R^2)/(2Ct)}}{(\vert
B(x;t^{1/2})\vert\vert
B(y;t^{1/2})\vert)^{1/2}}\nonumber \\
&\le
C^{(5/2)+((\log C)/\log 2)}\int_{\Omega}dx \int_{M-\Omega}dy \frac{e^{-(d(x,y)^2+R^2)/(2Ct)}}
{(\vert B(x;(2C^2t)^{1/2})\vert\vert
B(y;(2C^2t)^{1/2})\vert)^{1/2}}\nonumber \\ & \le C^{(7/2)+((\log C)/\log 2)}
e^{-R^2/(2Ct)}\int_{\Omega}dx\int_{M-\Omega}dy\, p_M(x,y;2C^2t)\nonumber
\\ & \le
C^{(7/2)+((\log C)/\log 2)}e^{-R^2/(2Ct)}F_{\Omega}(2C^2t).
\end{align}
Since $F$ is subadditive with $F(0)=0$ and $C\ge 2$, we have that $$F_{\Omega}(2C^2t)\le F_{\Omega}(([2C^2]+1)t)\le([2C^2]+1)F_{\Omega}(t)\le C^{7/2}F_{\Omega}(t).$$ Hence, by \eqref{e44},
\begin{align}\label{e45}
\int_{\Omega}dx\, &\int_{(M-\Omega)\cap(M-B(x;R))}dy\, p_M(x,y;t)\le C^{7+((\log C)/\log 2)}e^{-R^2/(2Ct)}F_{\Omega}(t).
\end{align}
We choose $R^2$ such that the coefficient of $F_{\Omega}(t)$ in \eqref{e45} is
equal to $\frac12$. That is
 \begin{equation}\label{e46}
R_*^2=2Ct\log\big(2C^{7+((\log C)/\log 2)}\big).
\end{equation}
Rearranging and bootstrapping gives, by \eqref{e42}-\eqref{e45}, that
\begin{align}\label{e47}
F_{\Omega}(t)&\le 2C^{3/2}\left(\frac{R_*+t^{1/2}}{t^{1/2}}\right)^{(\log C)/\log 4}\int_{\Omega}dx
\frac{\nu_{\Omega}(x;R_*)}{\vert B(x;t^{1/2})\vert}\nonumber \\ &\le
2C^{5/2}\left(\frac{R_*+t^{1/2}}{t^{1/2}}\right)^{(\log C)/\log 4}\left(\frac{R_*}{t^{1/2}}\right)^{(\log C)/\log 2}\int_{\Omega}dx
\frac{\nu_{\Omega}(x;R_*)}{\vert B(x;R_*)\vert}\nonumber \\ & \le 2C^3\left(\frac{R^2_*}{t}\right)^{(3\log C)/(4\log 2)}\int_{\Omega}dx
\frac{\nu_{\Omega}(x;R_*)}{\vert B(x;R_*)\vert}.
\end{align}
Since $t\mapsto F_{\Omega}(t)$ is concave, with $F_{\Omega}(t)\ge0,$ we see that
\begin{equation}\label{e48}
F_{\Omega}(t)\ge \frac{t}{R_*^2}F_{\Omega}(R_*^2).
\end{equation}
Combining \eqref{e46}-\eqref{e48}, gives that
\begin{align*}%\label{e49}
F_{\Omega}(R^2_*)&\le 4C^{15/4}\bigg(C\log\big(2C^{7+((\log C)/\log 2)}\big)\bigg)^{1+((3\log C)/(4\log 2))}\nonumber \\ & \hspace{45mm}\times\int_{\Omega}dx
\frac{\nu_{\Omega}(x;R_*)}{\vert B(x;R_*)\vert}.
\end{align*}
This gives, after rescaling $t$, the upper bound in \eqref{e21} with $L_2$ given in \eqref{e22}.
This completes the proof of Theorem \ref{the2}.\hspace*{\fill }$\square $

\section{Analysis of an example\label{sec3}}

In this section we present the asymptotic analysis of $H_{\Omega}(t)$ as $t\downarrow 0$, of an open set $\Omega$ in $M=\R^m$ consisting of disjoint balls with centres in $\Z^m$, and decreasing radii. Recall that $p_{\R^m}(x,y;t)=(4\pi t)^{-m/2}e^{-|x-y|^2/(4t)}.$ Let
\begin{equation}\label{e51}
\Omega=\cup_{i\in \N} B(z_i;r_i),
\end{equation}
where $(z_i)_{i\in \N}$ is an enumeration of $\Z^m$, and where
$r_1\ge r_2\ge \dots$.
Furthermore, let
\begin{equation}\label{e52}
\delta=1-2r_1>0.
\end{equation}
Theorem \ref{the3} (ii) below asserts that if $H_{\Omega}(t)<\infty$ for all $t>0$, and if \eqref{e52} holds then the balls loose heat independently as $t\downarrow 0$ up to a term exponentially small  in $t$.

\begin{theorem}\label{the3}
\textup{(i)} If $\delta>0$, then $H_{\Omega}(t)<\infty$ for all $t>0$ if and only if
\begin{equation}\label{e53}
\sum_{i=1}^{\infty}r_i^{2m}<\infty.
\end{equation}
\textup{(ii)} If $\delta>0$ and \eqref{e53} holds, then
\begin{equation}\label{e54}
\bigg\lvert H_{\Omega}(t)-\sum_{i=1}^{\infty}H_{B(z_i;r_i)}(t)\bigg\rvert \le \omega_m^2e^{-\delta^2/(8t)}\left(\frac{\sqrt2}{\delta}+\frac{1}{(4\pi t)^{1/2}}\right)^m\sum_{i=1}^{\infty}r_i^{2m},
\end{equation}
where
$\omega_m=\vert B(0;1)\vert.$
\end{theorem}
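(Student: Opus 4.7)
For part (i), I would apply Theorem \ref{the1} to $M=\R^m$, where $|B(x;t^{1/2})|=\omega_m t^{m/2}$. By \eqref{e16}, finiteness of $H_\Omega(t)$ for some (equivalently all) $t>0$ is equivalent to finiteness of $\int_\Omega\mu_\Omega(x;t^{1/2})\,dx/(\omega_m t^{m/2})$. The separation condition $\delta>0$ ensures that for $0<t^{1/2}<\delta$ and $x\in B(z_i;r_i)$, the ball $B(x;t^{1/2})$ meets $\Omega$ only through $B(z_i;r_i)$. Choosing $t$ so small that, in addition, $t^{1/2}\ge 2r_i$ holds for all but finitely many $i$ gives $B(z_i;r_i)\subset B(x;t^{1/2})$ whenever $x\in B(z_i;r_i)$ for these $i$, producing the exact contribution $\omega_m r_i^{2m}/t^{m/2}$. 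The finitely many remaining balls contribute a bounded amount regardless, so the equivalence with $\sum_{i=1}^\infty r_i^{2m}<\infty$ follows.

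For part (ii), pairwise disjointness of the balls gives
\[
H_\Omega(t)-\sum_{i=1}^{\infty}H_{B(z_i;r_i)}(t)=\sum_{i\ne j}\int_{B(z_i;r_i)}\!\int_{B(z_j;r_j)}p_{\R^m}(x,y;t)\,dx\,dy.
\]
For $x\in B(z_i;r_i)$, $y\in B(z_j;r_j)$ with $i\ne j$ one has $|x-y|\ge|z_i-z_j|-r_i-r_j\ge 1-2r_1=\delta$, and the splitting $|x-y|^2/(4t)\ge\delta^2/(8t)+|x-y|^2/(8t)$ extracts the announced $e^{-\delta^2/(8t)}$ factor from $p_{\R^m}$. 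Bounding each pair integral by $|B(z_i;r_i)|\,|B(z_j;r_j)|\,e^{-d_{ij}^2/(8t)}$ with $d_{ij}:=|z_i-z_j|-r_i-r_j\ge|z_i-z_j|-(1-\delta)$, and then applying the elementary $r_i^m r_j^m\le(r_i^{2m}+r_j^{2m})/2$ together with the symmetry $d_{ij}=d_{ji}$, collapses the double sum to $\omega_m^2(4\pi t)^{-m/2}e^{-\delta^2/(8t)}\bigl(\sum_{i=1}^\infty r_i^{2m}\bigr)S(t)$, where
\[
S(t):=\sum_{n\in\Z^m\setminus\{0\}}\exp\!\left(-\frac{(|n|-(1-\delta))^2}{8t}\right).
\]

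The principal remaining step, which I expect to be the main technical obstacle, is the lattice estimate
\[
(4\pi t)^{-m/2}S(t)\le\left(\frac{\sqrt{2}}{\delta}+\frac{1}{(4\pi t)^{1/2}}\right)^{\!m},
\]
equivalently $S(t)\le(1+\sqrt{8\pi t}/\delta)^m$. I would prove it by splitting according to $|n|$: for $n\in\Z^m\setminus\{0\}$ with $|n|\le\sqrt{3}$ the trivial bound $(|n|-(1-\delta))^2\ge\delta^2$ suffices and, after multiplication by $(4\pi t)^{m/2}$, produces the $\sqrt{2}/\delta$ contribution (the gap from the sphere $\{|y|=1-\delta\}$ to the nearest lattice points $\pm e_k$ being exactly $\delta$); for $|n|\ge 2$ the bound $(|n|-(1-\delta))^2\ge|n|^2/4$ (valid since $1-\delta<1\le|n|/2$) reduces the tail to the factorised Gaussian lattice sum $\bigl(1+2\sum_{k\ge 1}e^{-k^2/(32t)}\bigr)^m$, which is controlled by the integral comparison $\sum_{k\ge 1}e^{-k^2/(32t)}\le\int_0^\infty e^{-x^2/(32t)}\,dx=\sqrt{8\pi t}/2$. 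Combining both ranges yields the product bound and substituting back produces \eqref{e54}.
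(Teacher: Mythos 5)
Your part (i) is a correct variant of the paper's argument: where you choose $t^{1/2}<\delta$ small and use $r_i\to0$ to control all but finitely many balls, the paper fixes the radius $\delta/2$ and does a case analysis on $r_i\ge\delta/4$ versus $r_i<\delta/4$; both reduce to Theorem~\ref{the1} in the same way. Your overall structure for part (ii) — split $H_\Omega$ into diagonal and off-diagonal sums, use $|x-y|\ge\delta$ to peel off $e^{-\delta^2/(8t)}$, apply $r_i^mr_j^m\le(r_i^{2m}+r_j^{2m})/2$ and symmetry to collapse to a single lattice sum — also matches the paper.

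However, your proposed proof of the lattice estimate $S(t)\le(1+\sqrt{8\pi t}/\delta)^m$ does not work, and this is the crux of part (ii). You split the sum into $1\le|n|\le\sqrt3$ and $|n|\ge2$; the first range contributes roughly $c_m\,e^{-\delta^2/(8t)}$ and the second, via $(|n|-(1-\delta))^2\ge|n|^2/4$, contributes $(1+2\sum_{k\ge1}e^{-k^2/(32t)})^m$. But the bound $(|n|-(1-\delta))^2\ge|n|^2/4$ has thrown away all dependence on $\delta$, so the resulting Gaussian integral comparison produces $(1+c\sqrt{t})^m$ with a $\delta$-independent constant $c$. No addition of the exponentially small first range can then recover the $\delta^{-1}$ in the target $(1+\sqrt{8\pi t}/\delta)^m$: for $\delta$ close to $1$ the target is \emph{smaller} than your tail bound. (There is also a minor slip: $\int_0^\infty e^{-x^2/(32t)}\,dx=\sqrt{8\pi t}$, not $\sqrt{8\pi t}/2$.) More fundamentally, a sum over two disjoint ranges cannot be dominated term-by-term by a binomial expansion of $(a+b)^m$, which is what the claimed ``combining both ranges yields the product bound'' would require.

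The fix — and what the paper does — is to keep $\delta$ in the exponent from the start. Since $|z_i-z_j|\ge1$ for $i\ne j$, one has $(1-\delta)(|z_i-z_j|-1)\ge0$, hence $|x-y|\ge|z_i-z_j|-(1-\delta)\ge\delta|z_i-z_j|$. This yields the \emph{factorable} exponent $\delta^2|z_i-z_j|^2/(8t)$, and the lattice sum becomes
\[
\sum_{z\in\Z^m}e^{-\delta^2|z|^2/(8t)}=\Bigl(\sum_{k\in\Z}e^{-\delta^2k^2/(8t)}\Bigr)^m\le\Bigl(1+2\int_0^\infty e^{-\delta^2x^2/(8t)}\,dx\Bigr)^m=\Bigl(1+\tfrac{\sqrt{8\pi t}}{\delta}\Bigr)^m,
\]
exactly the constant in \eqref{e54}. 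Equivalently, within your setup it suffices to note $(|n|-(1-\delta))^2\ge\delta^2|n|^2$ for all $|n|\ge1$, which renders the split into two ranges unnecessary. With that single replacement your argument closes.
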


Below we consider four main regimes: $\frac{1}{2m}<\alpha< \frac{1}{m}$, $\frac{1}{m}<\alpha< \frac{1}{m-1}$, $\frac{1}{m-1}<\alpha< \frac{1}{m-2}$, and $\frac{1}{m-2}<\alpha.$ The latter regime is absent for $m=2$.
In the first regime $\Omega$ has infinite measure, and Theorem \ref{the1} (iii) gives the order of magnitude as $t\downarrow 0$. This has been refined in \eqref{e56}-\eqref{e57} below. In the second regime $\Omega$ has infinite perimeter, and Theorem \ref{the2} gives the order of magnitude as $t\downarrow 0$. This has been refined in \eqref{e63}-\eqref{e64} below. In the third and fourth regimes $\Omega$ has finite perimeter. Theorem \ref{the2} gives  two-sided bounds of order $t^{1/2}$. In \eqref{e60} and \eqref{e62} below we show that the perimeter term appears with the usual numerical constant. The remainder estimates depend on whether $\sum_{i\in \N}r_i^{m-2}$ is infinite or finite. Furthermore there are several borderline cases: $\alpha=\frac{1}{m}, \frac{1}{m-1}, \frac{1}{m-2}$. They all involve logarithmic corrections in the heat content. We only analyse, as an example, the case $\alpha=\frac{1}{m-2}$. The latter case is again absent for $m=2$.
\begin{theorem}\label{the4}
Let $0<a\le\frac14, m\ge 2,$ and let $r_i=ai^{-\alpha},\ i\in \N.$

If $\frac{1}{2m}<\alpha< \frac{1}{m}$ then
\begin{equation}\label{e56}
H_{\Omega}(t)=c_{\alpha,m}t^{(m\alpha-1)/(2\alpha)}+O(1),\ t\downarrow 0,
\end{equation}
where
\begin{align}\label{e57}
c_{\alpha,m}&=2^{m-1-\frac{1}{\alpha}}\pi^{-m/2}\alpha^{-1}\Gamma((2m\alpha-1)/(2\alpha))a^{1/\alpha}\nonumber \\ &\ \ \  \times\int_{B(0;1)}dx \int_{B(0;1)}dy\,\vert x-y\vert^{(1-2m\alpha)/\alpha}.
\end{align}
If $\frac{1}{m}<\alpha< \frac{1}{m-1}$ then
\begin{equation}\label{e58}
F_{\Omega}(t)=d_{\alpha,m}t^{(m\alpha-1)/(2\alpha)} + O(t^{1/2}),\ t\downarrow 0,
\end{equation}
where
\begin{align}\label{e59}
d_{\alpha,m}&=2^{m-1-\frac{1}{\alpha}}\pi^{-m/2}\alpha^{-1}\Gamma((2m\alpha-1)/(2\alpha))a^{1/\alpha}\nonumber \\ &\ \ \  \times\int_{B(0;1)}dx \int_{\R^m-B(0;1)}dy\,\vert x-y\vert^{(1-2m\alpha)/\alpha}.
\end{align}
If $m>2$ and $\frac{1}{m-1}<\alpha<\frac{1}{m-2}$ or if $m=2$ and $\frac{1}{m-1}<\alpha$, then
\begin{equation}\label{e60}
F_{\Omega}(t)=\pi^{-1/2}\textup{Per}(\Omega)t^{1/2}+O(t^{(m\alpha-1)/(2\alpha)}), \ t\downarrow 0.
\end{equation}
If $m>2$ and $\alpha=\frac{1}{m-2}$ then
\begin{equation}\label{e61}
F_{\Omega}(t)=\pi^{-1/2}\textup{Per}(\Omega)t^{1/2}+O\left(t\log \frac{1}{t}\right), \ t\downarrow 0.
\end{equation}
If $m>2$ and $\frac{1}{m-2}<\alpha$ then
\begin{equation}\label{e62}
F_{\Omega}(t)=\pi^{-1/2}\textup{Per}(\Omega)t^{1/2}+O(t), \ t\downarrow 0.
\end{equation}
\end{theorem}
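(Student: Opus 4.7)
The master plan is to invoke Theorem~\ref{the3}(ii) to replace $H_\Omega(t)$ (equivalently $F_\Omega(t)=|\Omega|-H_\Omega(t)$ when $|\Omega|<\infty$) by the sum $\sum_{i}r_i^mH_{B(0;1)}(t/r_i^2)$ at the cost of an exponentially small remainder, and to analyse this sum through the two known expansions of the dimensionless profile: for $s\downarrow 0$, $H_{B(0;1)}(s)=\omega_m-m\omega_m\pi^{-1/2}s^{1/2}+O(s^{3/2})$ by~\eqref{e4}; for $s\to\infty$, Taylor-expanding $e^{-|\xi-\eta|^2/(4s)}$ in the defining integral gives $H_{B(0;1)}(s)=\omega_m^2(4\pi s)^{-m/2}(1+O(1/s))$. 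With $r_i=ai^{-\alpha}$ the crossover between these regimes sits at $i\sim N:=(a^2/t)^{1/(2\alpha)}$, i.e., $r_i\sim\sqrt t$.

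In Regime~1, $\tfrac{1}{2m}<\alpha<\tfrac{1}{m}$, I set $h_{\textup{sc}}=(t/a^2)^{1/(2\alpha)}$ and $\phi(x)=x^{-m\alpha}H_{B(0;1)}(x^{2\alpha})$, so that the sum reads $t^{m/2}\sum_{i\ge 1}\phi(ih_{\textup{sc}})$. The function $\phi$ is strictly decreasing on $(0,\infty)$ (product of two decreasing positive factors, by monotonicity of $H_{B(0;1)}$ from~\eqref{e14}), is $\sim\omega_m x^{-m\alpha}$ near $0$ (integrable since $m\alpha<1$), and $\sim\omega_m^2(4\pi)^{-m/2}x^{-2m\alpha}$ near $\infty$ (integrable since $2m\alpha>1$). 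The standard monotone Riemann-sum bound gives $h_{\textup{sc}}\sum_i\phi(ih_{\textup{sc}})=\int_0^\infty\phi(x)\,dx+O(h_{\textup{sc}}\phi(h_{\textup{sc}}))$, and after the change of variable $v=x^{2\alpha}$ the main term becomes $c_{\alpha,m}t^{(m\alpha-1)/(2\alpha)}$ via the $\Gamma$-function identity
\begin{equation*}
\int_0^\infty v^{-m+1/(2\alpha)-1}e^{-|\xi-\eta|^2/(4v)}\,dv=2^{2m-1/\alpha}|\xi-\eta|^{(1-2m\alpha)/\alpha}\,\Gamma((2m\alpha-1)/(2\alpha)),
\end{equation*}
matching~\eqref{e57}; the remainder is $O(t^{m/2}h_{\textup{sc}}^{-m\alpha})=O(1)$, proving~\eqref{e56}. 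Regime~2, $\tfrac{1}{m}<\alpha<\tfrac{1}{m-1}$, is handled identically with $F_{B(0;1)}$ in place of $H_{B(0;1)}$: the analogous integrand $\psi(x)=x^{-m\alpha}F_{B(0;1)}(x^{2\alpha})$ is integrable at $0$ (as $(m-1)\alpha<1$) and at $\infty$ (as $m\alpha>1$), the same $\Gamma$-identity extracts $d_{\alpha,m}$ from~\eqref{e59}, and the Riemann-sum error is $O(t^{1/2})$, giving~\eqref{e58}.

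For Regimes~3 and~4, $\alpha>\tfrac{1}{m-1}$, the integral $\int_0^\infty\psi$ diverges at the origin, while the perimeter $\textup{Per}(\Omega)=m\omega_m\sum_i r_i^{m-1}$ is finite. I would subtract the short-time term pointwise: writing $F_{B(0;1)}(s)=m\omega_m\pi^{-1/2}s^{1/2}+G(s)$, with $G(s)=O(s^{3/2})$ at $s\downarrow 0$ and $G(s)=\omega_m-m\omega_m\pi^{-1/2}s^{1/2}+O(s^{-m/2})$ at $s\to\infty$, gives
\begin{equation*}
F_\Omega(t)=\pi^{-1/2}\textup{Per}(\Omega)\,t^{1/2}+\sum_{i=1}^\infty r_i^m G(t/r_i^2)+O(e^{-\delta^2/(8t)}).
\end{equation*}
Splitting the residual sum at $i=N$ yields $O(t^{3/2}\sum_{i<N}r_i^{m-3})$ from the small-$s$ side, and $O(\sum_{i>N}r_i^m)+O(t^{1/2}\sum_{i>N}r_i^{m-1})+O(t^{-m/2}\sum_{i>N}r_i^{2m})$ from the large-$s$ side. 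Direct evaluation with $r_i=ai^{-\alpha}$ shows that all of these scale as $t^{(m\alpha-1)/(2\alpha)}$ in Regime~3 (producing~\eqref{e60}), and as $o(t)$ in Regime~4 (producing~\eqref{e62}). The borderline $\alpha=\tfrac{1}{m-2}$ is precisely where $(m\alpha-1)/(2\alpha)=1$; the coefficient of this $t$ contribution in the Mellin-style evaluation contains a critical $\Gamma$-factor with a pole at this $\alpha$, whose residue interacts with the marginally logarithmic sum $\sum_{i\le N}r_i^{m-2}\asymp\log N$, producing the $\log(1/t)$ correction of~\eqref{e61}.

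The main obstacle I expect is the precise Riemann-sum error in Regime~2, where $\psi$ is singular at $x=0$ and not obviously globally monotone. I plan to isolate the leading singular piece $m\omega_m\pi^{-1/2}x^{-(m-1)\alpha}$ on a neighbourhood of~$0$, whose Riemann sum can be written in closed form via Hurwitz $\zeta$-type partial sums, and to reduce the remainder to a bounded-variation function handled by the trapezoidal rule. The exponentially small interaction from Theorem~\ref{the3}(ii) is negligible relative to every displayed error, and matching the constants $c_{\alpha,m}$, $d_{\alpha,m}$ with~\eqref{e57},~\eqref{e59} reduces, as indicated, to the one-variable $\Gamma$-identity above.
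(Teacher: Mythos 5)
Your overall strategy -- apply Theorem~\ref{the3}(ii) to reduce to $\sum_i r_i^m H_{B(0;1)}(t/r_i^2)$ and then estimate the sum via the short-time and long-time profiles of the unit ball -- matches the paper's, and your Regime~1 treatment is essentially identical (monotone Riemann sum plus a $\Gamma$-function identity, which you verify correctly). In Regimes~3--5, however, you take a genuinely different route: you subtract the perimeter term $m\omega_m\pi^{-1/2}s^{1/2}$ from $F_{B(0;1)}(s)$ pointwise, split the residual sum $\sum_i r_i^m G(t/r_i^2)$ at the crossover index $N\sim t^{-1/(2\alpha)}$, and use the two-ended asymptotics of $G$ separately. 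The paper instead invokes the \emph{uniform} (in $r$ and $t$) estimate $|H_{B(0;r)}(t)-|B(0;r)|+\pi^{-1/2}\textup{Per}(B(0;r))t^{1/2}|\le c_m r^{m-2}t$ from~\cite{vdBGi}, truncates the sum at a $t$-dependent index, and bounds the truncated part with a single estimate and the tail by its volume. Your approach buys a slightly sharper remainder in Regime~4 (you get $o(t)$ rather than $O(t)$, which is consistent with~\eqref{e62}), but it requires justifying that the $O(s^{3/2})$ control on $G$ holds on a whole interval $(0,1]$ (true for a fixed ball, but worth stating); the paper's uniform bound makes the bookkeeping simpler and extends to the borderline case without extra machinery.

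There are two genuine gaps. First, in Regime~2 you correctly identify that $\psi(x)=x^{-m\alpha}F_{B(0;1)}(x^{2\alpha})$ is neither monotone nor bounded near the origin, but you only outline a plan (Hurwitz-$\zeta$ partial sums plus a trapezoidal argument) rather than carrying it out; the paper handles this cleanly with a dedicated lemma (Lemma~\ref{lem2}) bounding $|\sum f(i)g(i)-\int fg|$ when $f$ is increasing and $g$ decreasing, applied with $f(x)=F_{B(0;1)}(a^{-2}x^{2\alpha}t)$, $g(x)=a^mx^{-m\alpha}$. You would need to supply something equivalent. Second, your treatment of the borderline case $\alpha=1/(m-2)$ is not a proof: the appeal to ``a critical $\Gamma$-factor with a pole'' and ``Mellin-style evaluation'' is not connected to any estimate you have set up. The correct mechanism (and the paper's) is elementary: with the ball estimate in place and a truncation at $I\sim t^{-(m-2)/2}$, the residual is $O\bigl(t\sum_{i\le I}r_i^{m-2}\bigr)+O\bigl(\sum_{i>I}r_i^m\bigr)$, and since $\alpha(m-2)=1$ the first sum is $\asymp t\log I\asymp t\log(1/t)$ while the second is $O(t)$. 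You should replace the Mellin heuristic with this direct computation.
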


\noindent{\it Proof of Theorem \ref{the3}.} To prove part (i) we first suppose that $H_{\Omega}(t)<\infty$ for some $t>0$. Then
\begin{align}\label{e63}
H_{\Omega}(t)&=\sum_{i=1}^{\infty}\sum_{j=1}^{\infty}\int_{B(z_i;r_i)}dx\int_{B(z_j;r_j)}dy\, p_{\R^m}(x,y;t)\nonumber \\ &\ge
\sum_{i=1}^{\infty}\int_{B(z_i;r_i)}dx\int_{B(z_i;r_i)}dy\, p_{\R^m}(x,y;t)
\nonumber \\ &\ge(4\pi t)^{-m/2}e^{-r_1^2/t}\omega_m^2\sum_{i=1}^{\infty}r_i^{2m}.
\end{align}
Next suppose that $\sum_{i\in \N}r_i^{2m}<\infty.$ Then
\begin{align}\label{e64}
\int_{\Omega}dx \frac{\mu_{\Omega}(x;\delta/2)}{\vert
B(x;\delta/2)\vert}&\le \sum_{\{i:r_i\ge \delta/4\}}\int_{B(z_i;r_i)}dx+\sum_{\{i:r_i< \delta/4\}}\int_{B(z_i;r_i)}dx\ \frac{\mu_{\Omega}(x;\delta/2)}{\vert
B(x;\delta/2)\vert}\nonumber \\ &\le \sum_{\{i:r_i\ge \delta/4\}}\int_{B(z_i;r_i)}dx(4r_i/\delta)^m+\omega_m\bigg(\frac{2}{\delta}\bigg)^m\sum_{\{i:r_i< \delta/4\}}r_i^{2m}\nonumber \\ &\le \omega_m\bigg(\frac{4}{\delta}\bigg)^m\sum_{i\in \N}r_i^{2m},
\end{align}
which implies the reverse implication by Theorem \ref{the1} (ii). This proves the assertion under (i).

To prove part (ii) we note that the lower bound in \eqref{e54} follows from the first inequality in \eqref{e63}. To prove the upper bound we observe that if $x\in B(z_i;r_i),y\in B(z_j;r_j),i\ne j$, then $$\vert x- y \vert\ge \vert z_i-z_j\vert -\vert z_i-x\vert-\vert y-z_j\vert
\ge \vert z_i-z_j\vert+\delta-1\ge \delta\vert z_i-z_j\vert.$$ Hence
\begin{align*}%\label{e65}
&\sum_{i=1}^{\infty}\sum_{\{j\in \N: j\ne i\}}\int_{B(z_i;r_i)}dx\int_{B(z_j;r_j)}dy\, p_{\R^m}(x,y;t)\nonumber \\
&\le \omega_m^2(4\pi t)^{-m/2}e^{-\delta^2/(8t)}\sum_{i=1}^{\infty}\sum_{\{j\in \N: j\ne i\}}r_i^mr_j^me^{-\vert z_i-z_j\vert^2\delta^2/(8t)}\nonumber \\ &
\le \frac12\omega_m^2(4\pi t)^{-m/2}e^{-\delta^2/(8t)}\sum_{i=1}^{\infty}\sum_{\{j\in \N: j\ne i\}}\left(r_i^{2m}+r_j^{2m}\right)e^{-\vert z_i-z_j\vert^2\delta^2/(8t)}\nonumber \\
&\le \omega_m^2(4\pi t)^{-m/2}e^{-\delta^2/(8t)}\sum_{i=1}^{\infty}r_i^{2m}\sum_{z\in \Z^m}e^{-\vert z\vert^2\delta^2/(8t)}\nonumber \\
&\le \omega_m^2(4\pi t)^{-m/2}e^{-\delta^2/(8t)}\sum_{i=1}^{\infty}r_i^{2m}\left(1+2\int_0^{\infty}dz\ e^{-\vert z\vert^2\delta^2/(8t)}\right)^m,
\end{align*}
which gives the bound in \eqref{e54}.\hspace*{\fill }$\square $

\noindent{\it Proof of Theorem \ref{the4}.} We first consider the case $\frac{1}{2m}<\alpha< \frac{1}{m}.$ By \eqref{e54}, it suffices to consider the sum in the left-hand side of \eqref{e59}. Since $r\mapsto H_{B(0;r)}(t)$ is increasing, $i\mapsto H_{B(0;ai^{-\alpha})}(t)$ is decreasing.
Hence
\begin{align}\label{e66}
\sum_{i=1}^{\infty}H_{B(z_i;r_i)}(t)&=\sum_{i=1}^{\infty}H_{B(0;ai^{-\alpha})}(t)\nonumber \\ &
\le \int_0^{\infty}di\ H_{B(0;ai^{-\alpha})}(t)\nonumber \\ &=\int_0^{\infty}di(ai^{-\alpha})^{2m}\int_{B(0;1)}dx \int_{B(0;1)}dy\ p_{\R^m}(axi^{-\alpha},ayi^{-\alpha}; t).
\end{align}
A straightforward application of Tonelli's theorem gives the formulae under \eqref{e56} and \eqref{e57}.
To obtain a lower bound for the left-hand side of \eqref{e66}, we use the monotonicity of $i\mapsto H_{B(0;ai^{-\alpha})}(t)$ once more, and obtain that
\begin{align}\label{e67}
\sum_{i=1}^{\infty}H_{B(z_i;r_i)}(t)&\ge \int_1^{\infty}di\,H_{B(0;ai^{-\alpha})}(t)\nonumber \\ &=c_{\alpha,m}t^{(m\alpha-1)/(2\alpha)}-\int_0^1di\,H_{B(0;ai^{-\alpha})}(t).
\end{align}
The last term in the right-hand side of \eqref{e67} is bounded in absolute value by
\begin{align*}%\label{e73}
\int_0^1di\,H_{B(0;ai^{-\alpha})}(t)\le
\int_0^1di\,\vert B(0;ai^{-\alpha})\vert=\omega_ma^m(1-\alpha m)^{-1}.
\end{align*}
This completes the proof of the assertion under \eqref{e56} and \eqref{e57}.

Consider the case $\frac{1}{m}<\alpha< \frac{1}{m-1}$. By \eqref{e54}, and scaling we have that
\begin{align}\label{e68}
F_{\Omega}(t)&=\sum_{i=1}^{\infty}F_{B(0;ai^{-\alpha})}(t)+O(e^{-\delta^2/(16t)})\nonumber \\ &
=\sum_{i=1}^{\infty}(ai^{-\alpha})^{m}F_{B(0;1)}(a^{-2}i^{2\alpha}t)+O(e^{-\delta^2/(16t)}).
\end{align}
In a similar way to the proof of \eqref{e56},\eqref{e57}, we approximate the sum with respect to $i$ by an integral. However, $i\mapsto F_{B(0;1)}(a^{-2}i^{2\alpha}t)$ is increasing, whereas $i\mapsto(ai^{-\alpha})^{m}$ is decreasing.
\begin{lemma}\label{lem2} Let
$f:\R^+\mapsto \R^+$ be increasing, and let $g:\R^+\mapsto \R^+$ be decreasing. If $fg$ is summable, then
\begin{equation}\label{e69}
\big\vert \sum_{i=1}^{\infty}f(i)g(i)-\int_1^{\infty} dx\, f(x)g(x) \big\vert\le \sum_{i=1}^{\infty}f(i+1)\big(g(i)-g(i+1)\big).
\end{equation}
\end{lemma}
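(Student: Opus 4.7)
The plan is to prove the two one-sided bounds $S - I \le \mathrm{RHS}$ and $I - S \le \mathrm{RHS}$ separately, where $S = \sum_{i=1}^{\infty} f(i)g(i)$, $I = \int_1^{\infty} f(x)g(x)\,dx$, and $\mathrm{RHS} = \sum_{i=1}^{\infty} f(i+1)(g(i) - g(i+1))$. Each one-sided bound will come from integrating a pointwise estimate on $[i, i+1]$ and summing over $i$, but the crucial choice is to pair $\int_i^{i+1} f(x)g(x)\,dx$ with $f(i)g(i)$ in one direction and with $f(i+1)g(i+1)$ in the other; this asymmetric pairing is what keeps the final bound a single sum.

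For $S - I \le \mathrm{RHS}$, on $[i,i+1]$ I would write
\[
f(i)g(i) - f(x)g(x) = [f(i) - f(x)]\,g(x) + f(i)\,[g(i) - g(x)].
\]
The first summand is $\le 0$ since $f$ is increasing and $g \ge 0$, and the second lies in $[0,\,f(i)(g(i) - g(i+1))] \subseteq [0,\,f(i+1)(g(i) - g(i+1))]$ since $g$ is decreasing and $f(i) \le f(i+1)$. Integrating on $[i,i+1]$ and summing over $i$ gives $S - I \le \mathrm{RHS}$. For $I - S \le \mathrm{RHS}$, the analogous decomposition
\[
f(x)g(x) - f(i+1)g(i+1) = [f(x) - f(i+1)]\,g(x) + f(i+1)\,[g(x) - g(i+1)]
\]
has first summand $\le 0$ and second summand $\le f(i+1)(g(i) - g(i+1))$ on $[i,i+1]$. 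Integrating, summing, and using $\sum_{i=1}^{\infty} f(i+1)g(i+1) = S - f(1)g(1)$ together with $f(1)g(1) \ge 0$ yields $I - S \le \mathrm{RHS}$, and combining the two one-sided bounds gives \eqref{e69}.

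The main subtlety is why the selective pairing is essential. A symmetric sandwich of both $S$ and $I$ between $\sum_{i=1}^{\infty} f(i)g(i+1)$ and $\sum_{i=1}^{\infty} f(i+1)g(i)$ produces an error bound containing both $f(i+1)(g(i) - g(i+1))$ and an unwanted cross-term $g(i+1)(f(i+1) - f(i))$; absorbing the cross-term would require an Abel summation together with a boundary condition of the form $g(N)f(N+1) \to 0$ at infinity. Pairing asymmetrically with $f(i)g(i)$ in one direction and $f(i+1)g(i+1)$ in the other drops the cross-term automatically, leaving \eqref{e69} with no Abel summation and no boundary condition required.
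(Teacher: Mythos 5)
Your proof is correct and follows essentially the same route as the paper: you bound $\sum f(i)g(i) - \int_1^{\infty} f g$ from above by comparing $f(i)g(i)$ with $f(x)g(x)$ on $[i,i+1]$, and bound the reverse difference by comparing $\int_i^{i+1} f g$ with $f(i+1)g(i+1)$, then discard $f(1)g(1)\ge 0$ when reindexing — the paper does exactly this, writing the pointwise bounds as $f(i)g(i+1)\le \int_i^{i+1}fg \le f(i+1)g(i)$ rather than via your explicit two-term decompositions, but the algebra is equivalent. Your closing remark on why the asymmetric pairing avoids an Abel summation is a correct and useful observation, though not strictly part of the proof.
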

\begin{proof}
We have that  $$\int_i^{i+1}dx\,f(x)g(x)\ge f(i)g(i+1)=f(i)g(i)-f(i)\big(g(i)-g(i+1)\big),$$ and so
\begin{align}\label{e77}
\sum_{i=1}^{\infty}f(i)g(i)\le \int_1^{\infty}dx\, f(x)g(x)+\sum_{i=1}^{\infty}f(i)\big(g(i)-g(i+1)\big).
\end{align}
Similarly
$$\int_i^{i+1}dx\,f(x)g(x)\le f(i+1)g(i)=f(i+1)g(i+1)+f(i+1))\big(g(i)-g(i+1)\big),$$
and
\begin{align*}%\label{e79}
\sum_{i=1}^{\infty}f(i+1)g(i+1)\ge \int_1^{\infty}dx\,f(x)g(x)-\sum_{i=1}^{\infty}f(i+1)\big(g(i)-g(i+1)\big).
\end{align*}
So
\begin{align}\label{e80}
\sum_{i=1}^{\infty}f(i)g(i)\ge \int_1^{\infty}dx\,f(x)g(x)-\sum_{i=1}^{\infty}f(i+1)\big(g(i)-g(i+1)\big).
\end{align}
Inequality \eqref{e69} follows from \eqref{e77}, \eqref{e80} and $f(i)\le f(i+1)$.
\end{proof}
Let $f(x)=F_{B(0;1)}(a^{-2}x^{2\alpha}t)$, and $g(x)=a^mx^{-m\alpha}$. Using $g(x)-g(x+1)\le a^m m\alpha x^{-m\alpha-1}$, we obtain that
\begin{align}\label{e70}
0\le \sum_{i=1}^{\infty}f(i+1)\big(g(i)-g(i+1)\big)&\le a^mm\alpha\sum_{i=1}^{\infty}i^{-m\alpha-1}F_{B(0;1)}(a^{-2}(i+1)^{2\alpha}t)\nonumber \\ &\le
a^mm\alpha\sum_{i=1}^{\infty}i^{-m\alpha-1}F_{B(0;1)}(a^{-2}(2i)^{2\alpha}t)\nonumber \\ &\le a^{m-1}m^2\omega_m\alpha2^{\alpha}\pi^{-1/2}\sum_{i=1}^{\infty}i^{-m\alpha+\alpha-1}t^{1/2}\nonumber \\ &=O(t^{1/2}),
\end{align}
where we have used that (Proposition 8 in \cite{P}) $F_{B(0;1)}(t)\le m\omega_m\pi^{-1/2}t^{1/2}$.
This gives that
\begin{equation}\label{e71}
\int_0^1dx\,f(x)g(x)\le m\omega_m\pi^{-1/2}a^{m-1}t^{1/2}\int_0^1dx\ x^{-m\alpha+\alpha}=O(t^{1/2}).
\end{equation}
By \eqref{e68},\eqref{e69},\eqref{e70}, and \eqref{e71} we conclude that
\begin{align*}%\label{e72}
F_{\Omega}(t)&=\int_0^{\infty}dx\,a^mx^{-m\alpha}F_{B(0;1)}(a^{-2}x^{2\alpha}t)+O(t^{1/2})\nonumber \\ &
=d_{\alpha,m}t^{(m\alpha-1)/(2\alpha)} + O(t^{1/2}),
\end{align*}
where $d_{\alpha,m}$ is given by \eqref{e59}. This completes the proof of \eqref{e58}.

Consider the cases $m>2$ and $\frac{1}{m-1}<\alpha< \frac{1}{m-2}$ or $m=2$ and $\frac{1}{m-1}<\alpha$. Then $\Omega$ has finite measure, and finite perimeter. Let $I\in \N$, and apply Theorem 2 from \cite{vdBGi} to a ball of radius $r$:
\begin{equation}\label{e73}
\vert H_{B(0;r)}-|B(0;r)|+\pi^{-1/2}\textup{Per}(B(0;r))t^{1/2}\vert\le c_mr^{m-2}t,\, t>0,
\end{equation}
where $c_m=2^{m+2}m^3\omega_m$. Then
\begin{align}\label{e74}
H_{\Omega}(t)&\ge \sum_{i=1}^I H_{B(0;ai^{-\alpha})}(t)\nonumber \\ &\ge
\vert\Omega\vert-\pi^{-1/2}\textup{Per}(\Omega)t^{1/2}-\sum_{i=I+1}^{\infty}\omega_ma^mi^{-\alpha m}-c_m\sum_{i=1}^{I}(ai^{-\alpha})^{m-2}t.
\end{align}
The third term in the right-hand side of \eqref{e74} is $O(I^{1-\alpha m})$. The fourth term is
$O(I^{1-\alpha(m-2)})t$. The choice $I=\lfloor t^{-1/(2\alpha)}\rfloor$ gives the $O(t^{(m\alpha-1)/(2\alpha)})$ remainder in the lower bound.

To obtain an upper bound, we let $J\in \N$, and note that by \eqref{e54},
\begin{align}\label{e89}
H_{\Omega}(t)&\le \sum_{i=1}^{\infty} H_{B(0;ai^{-\alpha})}(t)+O(e^{-\delta^2/(16t)})\nonumber \\ &
\le \sum_{i=1}^{J} H_{B(0;ai^{-\alpha})}(t)+\sum_{i=J+1}^{\infty}\vert B(0;ai^{-\alpha})\vert+O(e^{-\delta^2/(16t)})\nonumber \\ &
\le  \sum_{i=1}^{J}\left(\vert B(0;ai^{-\alpha})\vert-\pi^{-1/2}\textup{Per}(B(0;ai^{-\alpha}))t^{1/2}+c_m(ai^{-\alpha})^{m-2}t\right)\nonumber \\ & \ \ \ \ \ \ \ \ \ +\sum_{i=J+1}^{\infty}\vert B(0;ai^{-\alpha})\vert+O(e^{-\delta^2/(16t)})\nonumber \\ &\le
\vert\Omega\vert-\pi^{-1/2}\textup{Per}(\Omega)t^{1/2}+\pi^{-1/2}m\omega_m\sum_{i=J+1}^{\infty}(ai^{-\alpha})^{m-1}t^{1/2}\nonumber \\ &\ \ \ \ \ \ \ \ \ \
+c_m\sum_{i=1}^{J}(ai^{-\alpha})^{m-2}t+O(e^{-\delta^2/(16t)}),\, t\downarrow 0.
\end{align}
The third term in the right-hand side of \eqref{e89} is $O(J^{1-\alpha(m-1)})t^{1/2}$. The fourth term in the right-hand side of \eqref{e89} is $O(J^{1-\alpha(m-2)})t$. The choice $J=\lfloor t^{-1/(2\alpha)}\rfloor$ gives a remainder $O(t^{(m\alpha-1)/(2\alpha)})$ for the upper bound, and completes the proof of \eqref{e60}.

Next consider the case $\alpha=\frac{1}{m-2}$. The sum of the third and fourth terms in the right-hand side of \eqref{e74} equals, up to constants,
$I^{-2/(m-2)}+t\log I$. We now choose $I=\lfloor t^{-(m-2)/2}\rfloor$, and obtain the remainder in \eqref{e61}. Similarly, the sum of the third and fourth terms in the right-hand side of \eqref{e89} is of order $J^{-1/(m-2)}t^{1/2}+t \log J$. We now choose $J=\lfloor t^{-(m-2)/2}\rfloor$ to obtain the same remainder.

Finally, consider the case $m>2,\alpha>(m-2)^{-1}$. Then the uniform remainder in the right-hand side of \eqref{e73} is summable. Hence by \eqref{e54},
\begin{equation*}%\label{e90}
\vert H_{\Omega}(t)-\vert \Omega\vert +\pi^{-1/2}\textup{Per}(\Omega)t^{1/2}\vert\le c_m \sum_{i\in \N}(ai^{-\alpha})^{m-2}t + O(e^{-\delta^2/(16t)}),\, t\downarrow 0,
\end{equation*}
and we obtain \eqref{e62}. \hspace*{\fill }$\square $

\subsection*{Acknowledgement}
The author acknowledges support by The Leverhulme Trust
through International Network Grant \emph{Laplacians, Random Walks, Bose Gas,
Quantum Spin Systems}. It is a pleasure to thank Katie Gittins, Alexander Grigor'yan, and the referee for their helpful suggestions.

\end{document}